\documentclass[12pt,a4paper,leqno,verbatim]{amsart}
\newcounter{minutes}\setcounter{minutes}{\time}
\divide\time by 60
\newcounter{hours}\setcounter{hours}{\time}
\multiply\time by 60 \addtocounter{minutes}{-\time}

\usepackage{amssymb}
\usepackage{hyperref}
\usepackage{graphicx}
\date{}
\newfont{\cyrilic}{wncyr10 scaled 1000}
\title{New trigonometric and hyperbolic inequalities}
\author[B.A Bhayo]{Barkat Ali Bhayo}
\address{Koulutuskeskus Salpaus (Salpaus Further Education), 15110 Lahti, Finland}
\email{bhayo.barkat@gmail.com}

\author[R. Kl\'en]{Riku Kl\'en}
\address{Department of Mathematics and Statistics, University of Turku, FI-20014 Finland}
\email{riku.klen@utu.fi}

\author[J. S\'andor]{J\'ozsef S\'andor}
\address{Babe\c{s}-Bolyai University
Department of Mathematics
Str. Kogalniceanu nr. 1
400084 Cluj-Napoca, Romania}
\email{jsandor@math.ubbcluj.ro}

\newcommand{\comment}[1]{}

\swapnumbers
\theoremstyle{plain}

\newtheorem{theorem}[equation]{Theorem}
\newtheorem{lemma}[equation]{Lemma}

\newtheorem{corollary}[equation]{Corollary}

\theoremstyle{definition}
\newtheorem{remark}[equation]{Remark}

\numberwithin{equation}{section}

\pagestyle{headings}
\setcounter{page}{1}
\addtolength{\hoffset}{-1.15cm}
\addtolength{\textwidth}{2.5cm}
\addtolength{\voffset}{0.45cm}
\addtolength{\textheight}{-0.9cm}

\begin{document}

\def\thefootnote{}
\footnotetext{ \texttt{\tiny File:~\jobname .tex,
          printed: \number\year-\number\month-\number\day,
          \thehours.\ifnum\theminutes<10{0}\fi\theminutes}
} \makeatletter\def\thefootnote{\@arabic\c@footnote}\makeatother

\maketitle

\begin{abstract}
The aim of this paper is to prove new trigonometric and
hyperbolic inequalities , which constitute among others refinements or
analogs of famous Cusa-Huygens, Wu-Srivastava, and related inequalities. In
most cases, the obtained results are sharp.
\end{abstract}

\bigskip
{\bf 2010 Mathematics Subject Classification}: 26D05, 26D07, 33B10

{\bf Keywords}: circular functions, hyperbolic functions,
trigonometric inequalities, hyperbolic inequalities.


\section{introduction}

Since the last decade many authors have been interested in finding upper and lower bounds for expression $f(x)/x$, where $f(x)$ is a trigonometric or a hyperbolic function. We continue this line of research.

The inequality
\begin{equation}\label{laztri}
(\cos x)^{1/3}<\frac{\sin x}{x}<\frac{\cos x+2}{3}
\end{equation}
holds for $0<|x|<\pi/2$. The left hand side inequality is due to by D.D. Adamovi\'c and D.S. Mitrinovi´c \cite[p. 238]{mit}, and the right hand side one was obtained by N. Cusa and C. Huygens in 2005 \cite{sanben}. The hyperbolic version of the above inequality is given as 
\begin{equation}\label{lazhyp}
(\cosh x)^{1/3}<\frac{\sinh x}{x}<\frac{\cosh x+2}{3},
\end{equation}
for $x\neq 0$. The left hand side inequality in \eqref{lazhyp} was obtained by Lazarevi\'c \cite[p. 270]{mit}, 
and the right hand side inequality is called sometime
hyperbolic Cusa-Huygens inequality \cite{neusan}.

In 1989 J. Wilker \cite{wi} discovered the following inequality
\begin{equation}\label{wilka}
\left(\frac{\sin x}{x}\right)^2+\frac{\tan x}{x}>2,\quad 0<|x|<\frac{\pi}{2}.
\end{equation}
Numerous authors have studied this inequality by giving simpler proofs and generalization, e.g, see
\cite{guo,pi,neusan,sum,ws1,ws2,z1,z2}. The following inequality is due to Huygen \cite{hyu}
\begin{equation}\label{huyineq}
2\frac{\sin x}{x}+\frac{\tan x}{x}>3,\quad 0<|x|<\frac{\pi}{2}
\end{equation}
and is called Huygens inequality or the first Wilker inequality.
In \cite{ws1}, Wu and Srivastava introduced the following inequality
\begin{equation}\label{wilk}
\left(\frac{x}{\sin x}\right)^2+\frac{x}{\tan x}>2,\quad 0<|x|<\frac{\pi}{2}.
\end{equation}
which is called the second Wilker inequality.

We study inequalities \eqref{laztri}--\eqref{wilk} and our main results are the following five theorems. Our first main result is a generalization of Cusa-Huygens inequality \eqref{laztri}.

\begin{theorem}\label{thm1} For $x\in [-\pi/2,\pi/2]$, we have
\begin{equation}\label{thm1ineq}
\frac{\cos x+\alpha-1}{\alpha} \leq \frac{\sin x}{x} \leq \frac{\cos x+\beta-1}{\beta},
\end{equation}
with the best possible constants $\alpha=\pi/(\pi-2)\approx 2.75194$ and $\beta=3$. The lower bound is sharp for $x \in \{ -\pi/2 , 0 ,\pi/2 \}$ and the upper bound is sharp for $x=0$.
\end{theorem}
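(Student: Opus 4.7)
The plan is to recast the two-sided inequality as bounds on a single auxiliary quotient and then prove its monotonicity via the l'Hopital monotone rule. Since both $\sin(x)/x$ and $(\cos x + c - 1)/c$ are even in $x$, it suffices to treat $x \in [0,\pi/2]$. For $x \in (0,\pi/2]$ we have $x - \sin x > 0$, so a short manipulation shows
\[
\frac{\cos x + c - 1}{c} \le \frac{\sin x}{x} \iff c(x - \sin x) \le x(1 - \cos x) \iff c \le \varphi(x),
\]
where
\[
\varphi(x) := \frac{x(1 - \cos x)}{x - \sin x},
\]
and the reverse inequality corresponds to $c \ge \varphi(x)$. Hence the theorem, together with the sharpness assertions, will reduce to showing that $\varphi$ is strictly decreasing on $(0,\pi/2]$ with $\lim_{x \to 0^+}\varphi(x) = 3$ and $\varphi(\pi/2) = \pi/(\pi-2)$.

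The endpoint values are routine: $\varphi(\pi/2)$ is a direct substitution, while the limit at $0$ follows from the Taylor expansions $1 - \cos x \sim x^2/2$ and $x - \sin x \sim x^3/6$. The substance of the proof is the monotonicity of $\varphi$. I would apply the l'Hopital monotone rule to $\varphi = f/g$ with $f(x) = x(1-\cos x)$ and $g(x) = x - \sin x$, both vanishing at $0$. The half-angle identities $1 - \cos x = 2\sin^2(x/2)$ and $\sin x = 2\sin(x/2)\cos(x/2)$ give the clean form
\[
\frac{f'(x)}{g'(x)} = \frac{(1 - \cos x) + x\sin x}{1 - \cos x} = 1 + x\cot(x/2).
\]
Differentiating yields $\cot(x/2) - x/(2\sin^2(x/2))$, whose sign equals that of $\sin x - x$, which is negative on $(0,\pi/2]$. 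Thus $f'/g'$ is strictly decreasing, and the l'Hopital monotone rule gives the same for $\varphi$.

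Putting these pieces together, $\varphi$ is strictly decreasing on $(0,\pi/2]$ from the value $3$ (limit at $0$) down to $\pi/(\pi-2)$ (at $\pi/2$), which yields both inequalities of the theorem with the claimed best constants. Sharpness of the lower bound at $x = 0$ and $x = \pm \pi/2$ follows from the endpoint values of $\varphi$ (together with the evenness reduction), and sharpness of the upper bound at $x = 0$ follows in the same way. The only delicate step is the monotonicity of $f'/g'$, but as observed it reduces to the elementary inequality $\sin x < x$, so no significant obstacle is expected.
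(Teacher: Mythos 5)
Your proposal is correct, and it is built around exactly the same auxiliary quantity as the paper: your
\[
\varphi(x)=\frac{x(1-\cos x)}{x-\sin x}
\]
is the paper's $f_6(x)=\dfrac{\cos x-1}{(\sin x)/x-1}$ after clearing the factor of $x$. The difference lies entirely in how the monotonicity of this quotient is established. The paper rewrites $f_6$ as a ratio of two power series in $x^2$ with coefficients involving the Bernoulli numbers (via the expansions of $x\cot x$ and $x/\sin x$) and invokes the Biernacki--Krzy\.z type rule (Lemma \ref{lembk}): the coefficient ratio $2(2^{2n}-1)/(2^{2n}-2)$ is decreasing in $n$, hence $f_6$ is decreasing. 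You instead apply the l'H\^opital monotone rule (which is the paper's own Lemma \ref{lem0}, used elsewhere in the paper) to $f(x)=x(1-\cos x)$ and $g(x)=x-\sin x$, reduce $f'/g'$ to $1+x\cot(x/2)$ via half-angle identities, and observe that its derivative has the sign of $\sin x-x<0$. Your computations check out: $f'/g'=1+x\sin x/(1-\cos x)=1+x\cot(x/2)$, its derivative is $\cot(x/2)-x/(2\sin^2(x/2))$, which has the sign of $\sin x - x$, and the endpoint values $\varphi(0^+)=3$, $\varphi(\pi/2)=\pi/(\pi-2)$ are right. Your route is more elementary and self-contained (no Bernoulli numbers, no series manipulations; everything reduces to $\sin x<x$), while the paper's series method has the advantage of fitting the uniform template it reuses for Lemma \ref{lema} and Theorem \ref{thm0}, and in principle yields monotonicity on the full interval of convergence $(0,\pi)$ rather than just $(0,\pi/2)$.
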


\begin{remark}
 The upper bound of Theorem \ref{thm1} is sharp at point $x=0$ and the lower bound at points $x=-\pi/2$, $x=0$ and $x=\pi/2$. For values $x\in [-\pi/2,\pi/2]$ the difference between the function and the lower bound is less than 0.01 and the difference between the function and the upper bound less than 0.031. The right hand side inequality holds true for all real numbers $x$ and as a sharp inequality for all $x \neq 0$.
\end{remark}

In the following two theorems we introduce Wu-Srivastava type inequalities for the trigonometric functions.

\begin{theorem}\label{thm0}
For $x\in [-\pi/2,\pi/2]$, we have
\begin{equation}\label{thm0ineq}
\left(\frac{x}{\sin x}\right)^2+\left(\frac{\pi^2}{4}-1\right)\frac{x}{\tan x} \leq \frac{\pi^2}{4}
\end{equation}
and the equality is attained at values $x=-\pi/2$, $x=0$ and $x=\pi/2$.
\end{theorem}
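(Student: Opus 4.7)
The plan is to reduce \eqref{thm0ineq} to the monotonicity of a single auxiliary ratio, and then collapse that ratio by two nested applications of the monotone form of L'Hospital's rule (MLHR). By evenness of both sides it suffices to treat $x \in [0,\pi/2]$. Writing $c = \pi^2/4 - 1$ and rearranging \eqref{thm0ineq} as $(x/\sin x)^2 - 1 \leq c(1 - x\cot x)$, then clearing denominators, the inequality becomes, for $x \in (0,\pi/2]$,
\[
R(x) \;:=\; \frac{x^2 - \sin^2 x}{\sin x\,(\sin x - x\cos x)} \;\leq\; c.
\]
Here $\sin x - x\cos x = \int_0^x t\sin t\,dt > 0$, so the denominator is strictly positive on the interval. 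A Taylor expansion gives $\lim_{x\to 0^+}R(x) = 1 < c$, while direct substitution gives $R(\pi/2) = c$, so it suffices to show that $R$ is strictly increasing on $(0,\pi/2)$.

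Writing $R = p/q$ with $p(x) = x^2 - \sin^2 x$ and $q(x) = \sin x\,(\sin x - x\cos x)$, one has $p(0) = q(0) = 0$; a short calculation gives $p'(x) = 2x - \sin(2x)$ and $q'(x) = \tfrac{1}{2}\sin(2x) - x\cos(2x) = \cos x\,(\sin x - x\cos x) + x\sin^2 x$, which is strictly positive on $(0,\pi/2]$. MLHR therefore reduces the monotonicity of $R$ to that of $p'/q'$. Substituting $u = 2x \in (0,\pi)$,
\[
\frac{p'(x)}{q'(x)} \;=\; 2\,\frac{u - \sin u}{\sin u - u\cos u} \;=:\; 2\,\frac{s(u)}{t(u)},
\]
where $s(0) = t(0) = 0$ and $t'(u) = u\sin u > 0$ on $(0,\pi)$. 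A second application of MLHR passes the question to
\[
\frac{s'(u)}{t'(u)} \;=\; \frac{1 - \cos u}{u\sin u} \;=\; \frac{\tan(u/2)}{u},
\]
using the half-angle identities $1 - \cos u = 2\sin^2(u/2)$ and $\sin u = 2\sin(u/2)\cos(u/2)$.

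Thus everything reduces to showing that $\tan(u/2)/u$ is increasing on $(0,\pi)$. Setting $v = u/2$, this is the classical monotonicity of $\tan v/v$ on $(0,\pi/2)$, which follows from $(\tan v/v)' = \sec^2 v\,(2v - \sin(2v))/(2v^2) > 0$ on that interval. Cascading the two MLHR applications yields that $R$ is strictly increasing on $(0,\pi/2)$, so $R(x) < c$ there with $R(\pi/2) = c$; equality at $x = 0$ follows by continuous extension of the original left-hand side, since $x/\sin x \to 1$ and $x/\tan x \to 1$. The main obstacle I anticipate is purely algebraic: spotting the particular rearrangement into the ratio $R(x) \leq c$ that makes the iterated MLHR applicable, after which the analytic content reduces entirely to the standard monotonicity of $\tan v/v$.
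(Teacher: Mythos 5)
Your proposal is correct, and it proves exactly the same underlying fact as the paper but by a different mechanism. Your ratio $R(x)$ is precisely $f_1(x)-1$, where $f_1(x)=\bigl((x/\sin x)^2-x\cot x\bigr)/(1-x\cot x)$ is the function of the paper's Lemma \ref{lema}: the paper shows $f_1$ increases from $2$ to $\pi^2/4$ on $(0,\pi/2)$ by expanding numerator and denominator in power series with Bernoulli-number coefficients and invoking the coefficient-ratio criterion of Lemma \ref{lembk} (the ratio $a_n/c_n=2n$ is increasing). You instead establish the same monotonicity by two nested applications of the monotone form of L'Hospital's rule --- which is in fact the paper's Lemma \ref{lem0}, stated but not used for this theorem --- cascading down to the classical monotonicity of $\tan v/v$ on $(0,\pi/2)$. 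Your computations check out: $q'(x)=\cos x\,(\sin x - x\cos x)+x\sin^2 x>0$, the substitution $u=2x$ gives $p'/q'=2(u-\sin u)/(\sin u-u\cos u)$, and $s'/t'=\tan(u/2)/u$, and the endpoint values $R(0^+)=1$, $R(\pi/2)=\pi^2/4-1$ match the paper's limits for $f_1$. What each approach buys: the paper's series argument is essentially a one-line verification once the expansions \eqref{cot}, \eqref{cosec} are in hand, and it delivers the lower bound $f_1>2$ simultaneously (used for the second inequality of \eqref{sinxnew}, the Neuman--S\'andor/Wu--Srivastava form); your argument is more elementary and self-contained, avoiding Bernoulli numbers entirely at the cost of two derivative computations and a change of variable, and it too yields the lower endpoint $R>1$ if one wants it.
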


\begin{remark}
  The upper bound of Theorem \ref{thm0} is sharp at points $x=-\pi/2$, $x=0$ and $x=\pi/2$. For values $x\in [-\pi/2,\pi/2]$ the difference between the function and the upper bound is less that 0.13.
\end{remark}

\begin{theorem}\label{newthm} For $x\in[-\pi/2,\pi/2]$, we have
\begin{equation}\label{newineq1}
(\alpha-1)\frac{x}{\sin x}+\frac{x}{\tan x} \leq \alpha,
\end{equation}
\begin{equation}\label{newineq2}
\left(\frac{x}{\sin x}\right)^\alpha+\frac{x}{\tan x} < \left(\frac{\pi}{2}\right)^\alpha,
\end{equation}
with the best possible constant $\alpha=\pi/(\pi-2)$. The inequality \eqref{newineq1} is sharp for $x \in \{ -\pi/2,0,\pi/2 \}$ and the inequality \eqref{newineq2} is sharp for $x \in \{ -\pi/2,\pi/2 \}$.
\end{theorem}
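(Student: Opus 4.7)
The first inequality \eqref{newineq1} is nothing but a reformulation of the lower bound in Theorem~\ref{thm1}. By evenness of $x/\sin x$ and $x/\tan x$ we may assume $x\in(0,\pi/2]$. Multiplying $(\cos x+\alpha-1)/\alpha\le\sin x/x$ by the positive quantity $\alpha x/\sin x$ and using $x\cos x/\sin x = x/\tan x$ produces precisely $(\alpha-1)\,x/\sin x + x/\tan x \le \alpha$. The sharpness of \eqref{newineq1} at the three listed points and the optimality of $\alpha=\pi/(\pi-2)$ are therefore inherited from Theorem~\ref{thm1}.

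For \eqref{newineq2} my plan is to feed the bound just obtained back into the expression. Rewriting \eqref{newineq1} as $x/\tan x \le \alpha - (\alpha-1)\,x/\sin x$ and introducing the variable $u := x/\sin x$, which ranges over $[1,\pi/2]$ as $x$ ranges over $[0,\pi/2]$, the problem reduces to
\[
h(u) := u^\alpha - (\alpha-1)\,u + \alpha \;\le\; \Bigl(\frac{\pi}{2}\Bigr)^{\!\alpha}, \qquad u\in[1,\pi/2].
\]
The point of the choice $\alpha = \pi/(\pi-2)$ is the algebraic identity $(\alpha-1)(\pi/2)=\alpha$ (since $\alpha-1=2/(\pi-2)$), which immediately gives $h(\pi/2)=(\pi/2)^\alpha$, so that the target bound is actually \emph{attained} at the right endpoint. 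It therefore remains to check that $h$ is nondecreasing on $[1,\pi/2]$: one has $h''(u)=\alpha(\alpha-1)u^{\alpha-2}>0$ and $h'(1)=\alpha-(\alpha-1)=1>0$, so $h'\ge 1$ on $[1,\pi/2]$ and we are done. Sharpness of \eqref{newineq2} at $x=\pm\pi/2$ then follows from $h(\pi/2)=(\pi/2)^\alpha$ combined with equality in \eqref{newineq1} at those points.

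The main obstacle here is conceptual rather than computational. One has to notice that \eqref{newineq1} can be used as an intermediate bound inside \eqref{newineq2}, and that the specific value $\alpha=\pi/(\pi-2)$ is exactly what is needed to make the one-variable function $h$ touch its target at $u=\pi/2$. With that observation in hand, the remaining steps—monotonicity of $h$, and the sharpness statements—reduce to elementary calculus already carried out above.
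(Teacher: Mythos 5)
Your proof is correct. The first part coincides with the paper's: inequality \eqref{newineq1} is obtained by multiplying the lower bound of Theorem \ref{thm1} by $\alpha x/\sin x$. For \eqref{newineq2} you take a genuinely different, and in fact cleaner, route. The paper substitutes the Cusa-type bound $x/\sin x<\alpha/(\alpha-1+\cos x)$ into \emph{both} terms, arriving at $f_\alpha(\cos x)$ with $f_\alpha(b)=\left(\alpha/(\alpha+b-1)\right)^\alpha+\alpha b/(\alpha+b-1)$, and then needs a separate monotonicity lemma (Lemma \ref{lem1202}) proved by an explicit, somewhat opaque derivative computation. You instead feed \eqref{newineq1} back in to bound only the $x/\tan x$ term, keeping $u=x/\sin x$ exact in the power term, and reduce to $h(u)=u^\alpha-(\alpha-1)u+\alpha$ on $[1,\pi/2]$, whose monotonicity follows from convexity plus $h'(1)=1$. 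The two reductions are secretly the same: since $\alpha b/(\alpha+b-1)=\alpha-(\alpha-1)\cdot\alpha/(\alpha+b-1)$, one has $f_\alpha(b)=h\left(\alpha/(\alpha+b-1)\right)$, so the paper's lemma is your monotonicity claim in the coordinate $b=\cos x$; your convexity argument replaces the paper's derivative formula, and your key identity $(\alpha-1)\pi/2=\alpha$ makes the endpoint value transparent. One remark: your argument (like the paper's) shows that equality actually holds in \eqref{newineq2} at $x=\pm\pi/2$, so the strict inequality in the statement can only hold on the open interval; this is a defect of the theorem as stated (consistent with its own claim of ``sharpness'' at $\pm\pi/2$), not of your proof.
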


\begin{remark}
  For values $x\in [-\pi/2,\pi/2]$ the difference between the function $\alpha$ and the lower bound is less than 0.031. In inequality \eqref{newineq2} the difference between the function and the lower bound is between 1.45 and 1.9.
\end{remark}

Next theorem is a generalization of the Huygens inequality.

\begin{theorem}\label{thm2} For $x\in(-\pi/2,\pi/2)$, we have
\begin{equation}\label{thm2ineq}
3\cos x \le \frac{x}{\sin x}+2\frac{x}{\tan x} \le 2+\cos x.
\end{equation}
The inequalities are sharp at $x=0$.
\end{theorem}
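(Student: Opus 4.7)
Since the middle expression $\frac{x}{\sin x}+2\frac{x}{\tan x}$ and the bounds $3\cos x$, $2+\cos x$ are all even functions of $x$, it suffices to prove the inequalities for $x\in[0,\pi/2)$. Rewriting the middle term as
$$\frac{x}{\sin x}+2\frac{x}{\tan x}=\frac{x(1+2\cos x)}{\sin x},$$
and clearing the positive denominator $\sin x$ (the cases $x=0$ are handled by continuity, giving the common value $3$), the claim reduces to the pair of polynomial/trigonometric inequalities
$$3\sin x\cos x \;\le\; x(1+2\cos x) \;\le\; (2+\cos x)\sin x \qquad (0<x<\pi/2).$$

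For the right-hand inequality my plan is a direct monotonicity argument. Setting $h(x)=(2+\cos x)\sin x-x(1+2\cos x)$, a straightforward differentiation together with $\cos^2 x-1=-\sin^2 x$ collapses to the clean form
$$h'(x)=2\sin x\,(x-\sin x),$$
which is strictly positive on $(0,\pi/2)$ since $\sin x>0$ and $x>\sin x$ there. Combined with $h(0)=0$ this yields $h(x)>0$ on $(0,\pi/2)$, establishing the upper bound and its sharpness at $x=0$.

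For the left-hand inequality I would avoid a separate calculus argument and instead deduce it from the classical Cusa--Huygens inequality already recorded in \eqref{laztri}, namely $\sin x/x\le(\cos x+2)/3$, equivalently $x/\sin x\ge 3/(2+\cos x)$. Multiplying by the positive factor $1+2\cos x$ gives
$$\frac{x(1+2\cos x)}{\sin x}\;\ge\;\frac{3(1+2\cos x)}{2+\cos x},$$
so it suffices to verify $3(1+2\cos x)/(2+\cos x)\ge 3\cos x$, which after clearing denominators is simply $1\ge\cos^2 x$, valid on $[0,\pi/2)$ with equality only at $x=0$. Since Cusa--Huygens is also sharp precisely at $x=0$, sharpness of the lower bound at $x=0$ follows.

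There is no genuine obstacle: the only step requiring care is the simplification of $h'$, after which both bounds reduce to elementary facts ($x>\sin x$ and $\cos^2 x\le 1$); the rest is bookkeeping of signs and the observation that the lower bound is essentially a rewriting of Cusa--Huygens.
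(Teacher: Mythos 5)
Your proof is correct, and it takes a genuinely different (and in one respect more complete) route than the paper. For the upper bound the paper works with the reciprocal-type function $f_7$, invokes the two auxiliary bounds $(\cos(x/2))^{4/3}<\frac{\sin x}{x}<\frac{2+\cos x}{3}$, and finally reduces matters to the monotonicity of $f_9(y)=(1+y)^4/(2y+1)^3$; you instead clear denominators and differentiate $h(x)=(2+\cos x)\sin x-x(1+2\cos x)$ directly, and your simplification $h'(x)=2\sin x\,(x-\sin x)>0$ is correct (it is in fact the same expression $f_{11}$ that the paper later uses in the proof of Theorem \ref{thm2201}(1)), so your argument for the upper bound is shorter and avoids the Adamovi\'c--Mitrinovi\'c-type input entirely. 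For the lower bound the paper's printed proof actually stops after the second inequality and never supplies an argument for $3\cos x\le \frac{x}{\sin x}+2\frac{x}{\tan x}$, whereas you derive it cleanly from Cusa--Huygens \eqref{laztri} via $\frac{x}{\sin x}\ge\frac{3}{2+\cos x}$ and the reduction to $1\ge\cos^2 x$; this fills a gap the paper leaves open, at the modest cost of importing the classical inequality \eqref{laztri} (which the paper itself also uses inside its proof, so this is consistent with its toolkit). All steps check out, including the sharpness at $x=0$.
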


The following result is a Wilker and Wu-Srivastava type result for the hyperbolic functions.

\begin{theorem}\label{thm4} For $x>0$, we have
\begin{enumerate}
\item $\displaystyle\left(\frac{x}{\sinh x}\right)^2+\frac{x}{\tanh x}<\left(\frac{\sinh x}{x}\right)^2+
\frac{\tanh x}{x}<\frac{1+\cosh (2x/3)}{2}\left(\left(\frac{x}{\sinh x}\right)^2+\frac{x}{\tanh x}\right).$\\
\end{enumerate}
\end{theorem}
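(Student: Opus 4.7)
The plan is to observe that the three-term chain collapses to a single two-sided estimate, and then to dispatch the two halves separately. Set $A = \sinh(x)/x$ and $T = \tanh(x)/x$, so that $T = A/\cosh x$. A direct calculation gives
\[
\frac{(\sinh x/x)^2 + \tanh x/x}{(x/\sinh x)^2 + x/\tanh x} \;=\; \frac{A^2 + T}{1/A^2 + 1/T} \;=\; A^2 T \;=\; \frac{\sinh^3 x}{x^3 \cosh x}.
\]
Combining this with the half-angle identity $(1 + \cosh(2x/3))/2 = \cosh^2(x/3)$, the whole statement is equivalent to the single two-sided estimate
\[
1 \;<\; \frac{\sinh^3 x}{x^3 \cosh x} \;<\; \cosh^2(x/3), \qquad x > 0.
\]

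The left half is exactly the hyperbolic Lazarevi\'c inequality \eqref{lazhyp} recalled in the introduction, namely $(\sinh x/x)^3 > \cosh x$, so nothing further is required there. The real content lies in the right half, namely $\sinh^3 x < x^3 \cosh x \cosh^2(x/3)$.

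To attack that inequality I would set
\[
F(x) \;=\; \ln\cosh x + 2\ln\cosh(x/3) - 3\ln\bigl(\sinh(x)/x\bigr),
\]
observe that $\lim_{x \to 0^+} F(x) = 0$, and try to prove $F'(x) > 0$ for $x > 0$. Here
\[
F'(x) \;=\; \tanh x + \tfrac{2}{3}\tanh(x/3) + \frac{3}{x} - 3\coth x,
\]
where the first two terms are positive while $3/x - 3\coth x$ is negative; the task is to show that the positive terms dominate, for instance by series comparison or by an l'H\^opital-type monotone rule. An alternative plan is to use the triple-angle identities
\[
\sinh x = \sinh(x/3)\bigl(4\cosh^2(x/3) - 1\bigr), \qquad \cosh x = \cosh(x/3)\bigl(4\cosh^2(x/3) - 3\bigr),
\]
which, writing $y = x/3$ and $u = \cosh(x/3)$, recast the right inequality as the polynomial-type relation $(\sinh y/y)^3 (4u^2 - 1)^3 < 27\, u^3 (4u^2 - 3)$; this can then be tackled by combining a sharp estimate on $\sinh(y)/y$ with a polynomial-in-$u$ comparison.

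The main obstacle is the tightness at the origin: the right inequality $\sinh^3 x < x^3 \cosh x \cosh^2(x/3)$ holds with equality at $x = 0$ and matches to low order there, so crude substitutes such as the hyperbolic Cusa--Huygens bound $\sinh(x)/x < (\cosh x + 2)/3$ are too lossy to be chained to a successful conclusion. The technical heart of the proof is therefore the delicate monotonicity argument for $F$ above (or equivalently the polynomial inequality in $u = \cosh(x/3)$), which must be carried out with some care.
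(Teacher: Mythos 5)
Your reduction is correct and is exactly the right way to read the statement: writing $A=\sinh x/x$ and $T=\tanh x/x$, one has $1/A^2+1/T=(A^2+T)/(A^2T)$, so the chain is equivalent to $1<\sinh^3 x/(x^3\cosh x)<\cosh^2(x/3)$, and the left half is Lazarevi\'c's inequality \eqref{lazhyp} --- which is also how the paper disposes of the first inequality. The problem is the right half, which you leave only as a plan (``try to prove $F'(x)>0$''): that plan cannot be carried out, because the inequality $\sinh^3 x< x^3\cosh x\,\cosh^2(x/3)$ is false for large $x$. Asymptotically the left side grows like $e^{3x}/8$ while the right side grows only like $x^3e^{5x/3}/8$; concretely, at $x=5$ one has $\sinh^3x/(x^3\cosh x)\approx 44.0$ while $\cosh^2(5/3)\approx 7.5$. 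Equivalently, your $F$ satisfies $F'(x)\to 1+\tfrac23-3=-\tfrac43$ as $x\to\infty$, so $F$ is eventually decreasing and tends to $-\infty$; it is positive only on a bounded interval near the origin. Hence the second inequality of the theorem, as printed, fails for large $x$, and no completion of your argument exists.

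For comparison, the paper's one-line proof derives the second inequality from \eqref{ineq0515}, i.e. $\sinh x/x<(\cosh x)^{1/3}\cosh^2(x/3)$. Cubing this gives $\sinh^3x/(x^3\cosh x)<\cosh^6(x/3)=\bigl(\tfrac{1}{2}(1+\cosh(2x/3))\bigr)^3$, not $\tfrac{1}{2}(1+\cosh(2x/3))$. So the paper's own argument establishes the theorem only with the multiplicative factor cubed, and the printed statement appears to be missing that cube. With the corrected exponent, your reduction combined with \eqref{ineq0515} (proved in Lemma \ref{lem2b} via the auxiliary function $f_4$) yields the result immediately; your proposed monotonicity argument for $F$ would then be unnecessary, and in its stated form it targets an inequality that is simply false.
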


Our last result is a counterpart of
the inequality 
\begin{equation}\label{yanginequ}
\exp(-x^2/6)<\frac{2+\cos x}{3},\quad 
\end{equation}
$x\in(0,\infty)$, which recently appeared in \cite[Thm 2]{yang}. 

\begin{theorem}\label{2702} For $x\in(0,\pi/2)$, the following inequalities hold
$$\exp \left( \alpha-\frac{(\pi-2)x^2}{2\pi} \right) <\frac{(\pi-2)\cos(x)+2}{\pi}<\exp \left( \beta-\frac{(\pi-2)x^2}{2\pi} \right) ,$$
with the best possible constants $\alpha=(\pi^2+8\log(2/\pi)-2\pi)/8\approx -0.00328$ and $\beta=0$.
\end{theorem}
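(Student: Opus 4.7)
The plan is to reduce the claim to the lower Cusa--Huygens inequality of Theorem~\ref{thm1} by a logarithmic monotonicity trick. I would define
$$F(x) = \log\!\left(\frac{(\pi-2)\cos x + 2}{\pi}\right) + \frac{(\pi-2)x^2}{2\pi}, \qquad x \in [0,\pi/2],$$
so that after taking logarithms the desired double inequality becomes simply $\alpha < F(x) < \beta$ on $(0,\pi/2)$, with $\alpha,\beta$ to be identified as the boundary values of $F$.

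First I would evaluate $F$ at the endpoints. At $x=0$, $\cos 0 = 1$ gives $F(0) = \log 1 + 0 = 0$, matching $\beta$. At $x = \pi/2$, $\cos(\pi/2) = 0$ gives
$$F(\pi/2) = \log\!\left(\tfrac{2}{\pi}\right) + \tfrac{(\pi-2)\pi}{8} = \tfrac{\pi^2 - 2\pi + 8\log(2/\pi)}{8},$$
which is precisely the stated value of $\alpha$. So the constants in the theorem are nothing other than the endpoint values of $F$, and sharpness will follow for free once monotonicity is established.

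Next I would show that $F$ is strictly decreasing on $(0,\pi/2)$. A direct differentiation gives
$$F'(x) = (\pi-2)\left(\frac{x}{\pi} - \frac{\sin x}{(\pi-2)\cos x + 2}\right),$$
so $F'(x) < 0$ is equivalent to $\dfrac{(\pi-2)\cos x + 2}{\pi} < \dfrac{\sin x}{x}$. Rewriting the left side as $\dfrac{\cos x + \pi/(\pi-2) - 1}{\pi/(\pi-2)}$, this is exactly the lower inequality of Theorem~\ref{thm1} with the optimal constant $\pi/(\pi-2)$, and that inequality is strict on $(0,\pi/2)$ since Theorem~\ref{thm1} identifies its equality set as $\{-\pi/2,0,\pi/2\}$. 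Therefore $F' < 0$ on $(0,\pi/2)$, so $F$ is strictly decreasing and
$$\alpha = F(\pi/2) < F(x) < F(0) = \beta, \qquad x \in (0,\pi/2).$$
Exponentiating yields the theorem, and the attainment of $\alpha$ and $\beta$ at the endpoints of $[0,\pi/2]$ shows that neither constant can be improved.

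There is essentially no obstacle in this approach: the only non-trivial step is the monotonicity of $F$, and it is handed to us for free by Theorem~\ref{thm1}, since the derivative $F'$ is, up to the positive factor $(\pi-2)$, exactly the Cusa--Huygens gap controlled there. The remaining work reduces to bookkeeping of the two boundary values.
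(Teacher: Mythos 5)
Your proof is correct and follows essentially the same route as the paper: define the logarithmic difference $F$, observe that $F'(x)=(\pi-2)\bigl(x/\pi-\sin x/((\pi-2)\cos x+2)\bigr)<0$ by the lower bound of Theorem~\ref{thm1} with $\alpha=\pi/(\pi-2)$, and read off $\alpha,\beta$ as the endpoint values of $F$. Your write-up is if anything slightly more explicit about the equivalence with the Cusa--Huygens lower bound and about strictness on the open interval.
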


\section{Preliminaries and lemmas}

\begin{lemma}\label{lembk}
For $0<R\leq \infty$. Let $A(x)=\sum_{n=0}^\infty a_nx^n$ and 
$C(x)=\sum_{n=0}^\infty c_nx^n$ be two real power series converging on the interval $(-R,R)$. If the sequence
$\{a_n/c_n\}$ is increasing (decreasing) and $c_n>0$ for all $n$, then the function $A(x)/C(x)$ is also
increasing (decreasing) on $(0,R)$.
\end{lemma}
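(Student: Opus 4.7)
The plan is to show that the logarithmic derivative of $H(x):=A(x)/C(x)$ has constant sign on $(0,R)$, by expanding the numerator of $H'$ as a power series and proving that every coefficient of that series shares the required sign. Since $c_n>0$ for all $n$, the series $C(x)$ is strictly positive on $[0,R)$, so $H$ is well defined there and
\[
H'(x) \;=\; \frac{A'(x)C(x) - A(x)C'(x)}{C(x)^2}
\]
has the same sign as its numerator $N(x):=A'(x)C(x)-A(x)C'(x)$. Thus it suffices to show that the Taylor coefficients of $N$ are all non-negative in the increasing case (respectively, all non-positive in the decreasing case).

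I would carry out the two Cauchy products, taking care with the index shifts coming from differentiation, to arrive at
\[
N(x) \;=\; \sum_{m\ge 0} b_m\, x^m, \qquad b_m \;=\; \sum_{\substack{i+j=m+1 \\ i,j\ge 0}} (i-j)\, a_i\, c_j.
\]
Using $c_i>0$, I set $r_i:=a_i/c_i$ and rewrite $b_m = \sum_{i+j=m+1} (i-j)\, r_i\, c_i c_j$.

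The decisive step is a symmetrization on the index pair $(i,j)$. Swapping $i\leftrightarrow j$ gives a second expression for $b_m$, and averaging the two yields
\[
b_m \;=\; \frac{1}{2} \sum_{i+j=m+1} (i-j)(r_i - r_j)\, c_i\, c_j.
\]
When $\{r_n\}$ is increasing, $(i-j)$ and $(r_i-r_j)$ share sign, so each summand is non-negative; combined with $c_ic_j>0$ this forces $b_m\ge 0$ for every $m$, whence $N(x)\ge 0$ on $(0,R)$ and $H$ is increasing there. The decreasing case follows by the same argument with signs reversed, or by applying the proved case to $-A$.

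The main obstacle, and the only nontrivial idea, is spotting the symmetrization that turns the one-sided expression $(i-j)r_i$ into the manifestly sign-definite $(i-j)(r_i-r_j)$; everything else is routine Cauchy-product bookkeeping. No convergence difficulties arise, since termwise differentiation and multiplication of power series are both valid inside the common disc of convergence.
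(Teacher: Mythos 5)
Your proof is correct: the Cauchy-product expansion of $A'C-AC'$, the substitution $a_i=r_ic_i$, and the symmetrization giving $b_m=\tfrac12\sum_{i+j=m+1}(i-j)(r_i-r_j)c_ic_j$ are all sound, and the sign conclusion follows. Note that the paper offers no proof of this lemma at all --- it is the classical Biernacki--Krzy\.z result, stated with only a bibliography reference --- and the argument you give is precisely the standard proof from that literature, so there is nothing in the paper to diverge from. The only point you leave implicit is strictness (used later in the paper, e.g.\ in Lemma \ref{lema}): when $\{r_n\}$ is strictly increasing one has $b_0=(r_1-r_0)c_0c_1>0$, so $N(x)\ge b_0>0$ and the monotonicity is strict; this is a one-line addendum to your argument.
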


For $|x|<\pi$, the following power series expansions can be found in \cite[1.3.1.4 (2)--(3)]{jef},
\begin{equation}\label{xcot}
x \cot x=1-\sum_{n=1}^\infty\frac{2^{2n}}{(2n)!}|B_{2n}|x^{2n},
\end{equation}

\begin{equation}\label{cot}
\cot x=\frac{1}{x}-\sum_{n=1}^\infty\frac{2^{2n}}{(2n)!}|B_{2n}|x^{2n-1},
\end{equation}
and 
\begin{equation}\label{coth}
{\rm \coth} \, x=\frac{1}{x}+\sum_{n=1}^\infty\frac{2^{2n}}{(2n)!}|B_{2n}|x^{2n-1},
\end{equation}
where $B_{2n}$ are the even-indexed Bernoulli numbers, see \cite[p. 231]{IR}. 
We can get the following expansions directly from (\ref{cot}) and (\ref{coth}),

\begin{equation}\label{cosec}
\frac{1}{(\sin x)^2}=-(\cot x)'=\frac{1}{x^2}+\sum_{n=1}^\infty\frac{2^{2n}}{(2n)!}
|B_{2n}|(2n-1)x^{2n-2},
\end{equation}

\begin{equation}\label{cosech}
\frac{1}{(\sinh x)^2}=-({\rm coth} \, x)'=\frac{1}{x^2}-\sum_{n=1}^\infty\frac{2^{2n}}{(2n)!}(2n-1)|B_{2n}|x^{2n-2}.
\end{equation}
For the following expansion formula 
\begin{equation}\label{xsin}
\frac{x}{\sin x}=1+\sum_{n=1}^\infty\frac{2^{2n}-2}{(2n)!}|B_{2n}|x^{2n}
\end{equation}
see \cite{li}.


\begin{lemma}\cite[Theorem 2]{avv1}\label{lem0}
For $-\infty<a<b<\infty$,
let $f,g:[a,b]\to \mathbb{R}$
be continuous on $[a,b]$, and differentiable on
$(a,b)$. Let $g^{'}(x)\neq 0$
on $(a,b)$. If $f^{'}(x)/g^{'}(x)$ is increasing
(decreasing) on $(a,b)$, then so are
$$\frac{f(x)-f(a)}{g(x)-g(a)}\quad and \quad \frac{f(x)-f(b)}{g(x)-g(b)}.$$
If $f^{'}(x)/g^{'}(x)$ is strictly monotone,
then the monotonicity in the conclusion
is also strict.
\end{lemma}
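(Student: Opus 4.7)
The plan is to reduce the statement to a sign analysis of the derivative of
$\phi(x) := (f(x)-f(a))/(g(x)-g(a))$ via the Cauchy mean value theorem. I first record a preliminary observation that holds under the hypotheses: since $g'$ is a derivative that is continuous in the situations of interest and in any case nowhere vanishes on $(a,b)$, Darboux's theorem forces $g'$ to have a single, constant sign on $(a,b)$; consequently $g$ is strictly monotone on $[a,b]$, and $g(x)-g(a)$ has the same sign as $g'(x)$ for every $x\in(a,b]$. In particular $\phi$ is well-defined on $(a,b]$, and an analogous statement will hold for the second quotient.

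Next I would differentiate and rearrange:
$$\phi'(x)=\frac{f'(x)(g(x)-g(a))-(f(x)-f(a))g'(x)}{(g(x)-g(a))^2}=\frac{g'(x)}{g(x)-g(a)}\left(\frac{f'(x)}{g'(x)}-\frac{f(x)-f(a)}{g(x)-g(a)}\right).$$
By the Cauchy mean value theorem applied to $f,g$ on $[a,x]$, there exists $\xi=\xi(x)\in(a,x)$ such that
$$\frac{f(x)-f(a)}{g(x)-g(a)}=\frac{f'(\xi)}{g'(\xi)},$$
so that
$$\phi'(x)=\frac{g'(x)}{g(x)-g(a)}\left(\frac{f'(x)}{g'(x)}-\frac{f'(\xi)}{g'(\xi)}\right).$$
The preliminary observation shows the prefactor is strictly positive, so the sign of $\phi'(x)$ is exactly the sign of $f'(x)/g'(x)-f'(\xi)/g'(\xi)$ with $a<\xi<x$. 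If $f'/g'$ is increasing on $(a,b)$, this difference is nonnegative, whence $\phi$ is increasing on $(a,b]$; if $f'/g'$ is decreasing, the reverse. When $f'/g'$ is strictly monotone, the inequality $\xi<x$ gives $f'(\xi)/g'(\xi)\neq f'(x)/g'(x)$, so $\phi'(x)$ is nonzero with a constant sign and $\phi$ is strictly monotone.

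For the second quotient $\psi(x):=(f(x)-f(b))/(g(x)-g(b))$, I would run the parallel argument, this time producing $\xi\in(x,b)$ by Cauchy's MVT on $[x,b]$. The algebra is identical; the only change is that $g(x)-g(b)$ and $g'(x)$ now share a sign because of strict monotonicity of $g$ relative to the endpoint $b$, so the prefactor $g'(x)/(g(x)-g(b))$ is again positive. The inequality $\xi>x$, coupled with the monotonicity hypothesis on $f'/g'$, yields the same conclusion for $\psi$ on $[a,b)$.

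The only genuine obstacle is justifying that the prefactor $g'(x)/(g(x)-g(\ast))$ is positive without case-splitting on the sign of $g'$; once one notes that $g'$ has constant sign (via Darboux) and that by strict monotonicity $g(x)-g(a)$ inherits the sign of $g'$ on $(a,b]$ and $g(x)-g(b)$ inherits the opposite sign on $[a,b)$ which cancels against the sign of $g'$ in the ratio, the rest of the proof is bookkeeping and a single application of the Cauchy mean value theorem on each of the intervals $[a,x]$ and $[x,b]$.
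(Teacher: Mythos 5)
The paper does not prove this lemma at all --- it is quoted verbatim from Anderson--Vamanamurthy--Vuorinen \cite[Theorem 2]{avv1} --- so there is no in-paper argument to compare with; your Cauchy-mean-value-theorem route is the classical proof of this ``monotone l'H\^opital rule,'' and your treatment of the first quotient $(f(x)-f(a))/(g(x)-g(a))$ is complete and correct (Darboux forces $g'$ to have constant sign, hence $g(x)-g(a)$ shares that sign on $(a,b]$, the prefactor $g'(x)/(g(x)-g(a))$ is positive, and $\xi<x$ closes the argument).

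There is, however, a genuine sign error in your handling of the second quotient $\psi(x)=(f(x)-f(b))/(g(x)-g(b))$. You assert that ``$g(x)-g(b)$ and $g'(x)$ now share a sign \dots so the prefactor $g'(x)/(g(x)-g(b))$ is again positive.'' This is false: if $g'>0$ then $g$ is strictly increasing, so $g(x)-g(b)<0$ for $x<b$ while $g'(x)>0$; if $g'<0$ the signs are likewise opposite. Hence the prefactor $g'(x)/(g(x)-g(b))$ is always \emph{negative} on $[a,b)$. Taken literally, your argument then fails: with a positive prefactor and $\xi>x$, an increasing $f'/g'$ would give $f'(x)/g'(x)-f'(\xi)/g'(\xi)\le 0$ and hence $\psi'\le 0$, i.e.\ $\psi$ \emph{decreasing} --- the opposite of the lemma's conclusion. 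The repair is immediate and you clearly have the right mechanism in mind: the prefactor is negative, and for increasing $f'/g'$ the bracketed difference is also $\le 0$ because $\xi>x$, so the product is $\ge 0$ and $\psi$ is increasing (strictly, under strict monotonicity of $f'/g'$). You should state the two compensating sign flips explicitly rather than claiming the prefactor is positive; your closing sentence (``inherits the opposite sign \dots which cancels'') gestures at this but contradicts the earlier claim, so the write-up as it stands is internally inconsistent on exactly the step that distinguishes the two quotients.
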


\begin{lemma}\label{lema} The following function
$$f_1(x)=\frac{(x/\sin x)^2-x\cot x}{1-x\cot x}$$
is strictly increasing from $(0,\pi/2)$ onto $(\pi^2/4)$.
\end{lemma}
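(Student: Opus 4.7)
The plan is to recast $f_1$ as a ratio of power series in $x^2$ whose coefficient quotients form a monotone sequence, and then invoke Lemma~\ref{lembk}. First I would combine the expansions (\ref{xcot}) and (\ref{cosec}), writing
$$1-x\cot x=\sum_{n=1}^\infty \frac{2^{2n}}{(2n)!}|B_{2n}|\,x^{2n},\qquad \left(\frac{x}{\sin x}\right)^2=1+\sum_{n=1}^\infty \frac{2^{2n}(2n-1)}{(2n)!}|B_{2n}|\,x^{2n}.$$
Adding the second to the first minus $1$, the constant terms cancel and the coefficients combine via $(2n-1)+1=2n$ to give
$$\left(\frac{x}{\sin x}\right)^2-x\cot x=\sum_{n=1}^\infty \frac{2n\cdot 2^{2n}}{(2n)!}|B_{2n}|\,x^{2n}.$$

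Setting $c_n=\tfrac{2^{2n}}{(2n)!}|B_{2n}|$ (so $c_n>0$) and $a_n=2n\,c_n$, I then have
$$f_1(x)=\frac{\sum_{n=1}^\infty a_n x^{2n}}{\sum_{n=1}^\infty c_n x^{2n}},\qquad \frac{a_n}{c_n}=2n.$$
Since the quotient sequence $\{a_n/c_n\}=\{2n\}$ is strictly increasing and both series converge on $(-\pi,\pi)$, Lemma~\ref{lembk} (applied to the power series in the variable $y=x^2$, or directly after a trivial factoring) yields that $f_1$ is strictly increasing on $(0,\pi/2)$.

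For the range, the endpoints follow from straightforward evaluations: at $x=\pi/2$ we have $\cot(\pi/2)=0$ and $\sin(\pi/2)=1$, so $f_1(\pi/2)=(\pi/2)^2/1=\pi^2/4$; and as $x\to 0^+$, the ratio of the leading nonzero coefficients gives $\lim_{x\to 0^+}f_1(x)=a_1/c_1=2$. Thus $f_1$ maps $(0,\pi/2)$ strictly increasingly onto $(2,\pi^2/4)$ (the printed range appears to be missing its lower endpoint $2$).

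The whole argument is essentially mechanical; the only delicate step is spotting the cancellation $(2n-1)+1=2n$ that collapses the numerator's series into one whose coefficients are a clean multiple of those of the denominator. Once this is observed, Lemma~\ref{lembk} closes the monotonicity part, and the endpoint values require no real work.
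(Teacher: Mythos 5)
Your proposal is correct and follows essentially the same route as the paper: the same series identities give $A_1(x)=\sum \frac{2n\cdot 2^{2n}}{(2n)!}|B_{2n}|x^{2n}$ and $C_1(x)=\sum \frac{2^{2n}}{(2n)!}|B_{2n}|x^{2n}$, the quotient sequence $2n$ is increasing, and Lemma~\ref{lembk} gives monotonicity, with the endpoint values $2$ and $\pi^2/4$ obtained just as you do (the paper uses l'H\^opital at $0$ instead of leading coefficients, a cosmetic difference). You are also right that the printed range ``$(\pi^2/4)$'' is a typo for $(2,\pi^2/4)$.
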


\begin{proof}
Let $f_1(x)=A_1(x)/C_1(x)$, where 
$$A_1(x)=(x/\sin x)^2-x\cot x \quad {\rm and}\quad C_1(x)=1-x\cot x.$$
By using expansion formulas \eqref{cot} and \eqref{cosec} we get
\begin{eqnarray*}
A_1(x)&=&1+\sum_{n=1}^\infty\frac{2^{2n}}{(2n)!}(2n-1)|B_{2n}|x^{2n}-1
+\sum_{n=1}^\infty\frac{2^{2n}}{(2n)!}|B_{2n}|x^{2n}\\
&=&\sum_{n=1}^\infty\frac{2^{2n}2n}{(2n)!}|B_{2n}|x^{2n}=\sum_{n=1}^\infty a_nx^{2n},
\end{eqnarray*}
and
$$C_1(x)=\sum_{n=1}^\infty\frac{2^{2n}}{(2n)!}|B_{2n}|x^{2n}=\sum_{n=1}^\infty c_nx^{2n}.$$
Let $d_n=a_n/c_n=2n$, which is the increasing in $n\in\mathbb{N}$. Thus, by Lemma \ref{lembk} $f_1(x)$
is strictly increasing in $x\in(0,\pi/2)$. Applying l'H\^opital rule, we get $\lim_{x\to 0}f_1(x)=2$ 
and $\lim_{x\to \pi/2}f_1(x)=\pi^2/4$. This completes the proof.
\end{proof}

\begin{lemma}\label{lem2b} One has
\begin{equation}\label{lem2bineq1}
\frac{\tanh x}{x} \le \frac{2}{\sqrt{9+4x^2}-1},\quad x \in \mathbb{R},
\end{equation}
\begin{equation}\label{ineq0515}
\frac{\sinh x}{x}<\frac{\cosh x+2}{3}<(\cosh x)^{1/3}\frac{\cosh (2x/3)+1}{2},\quad x>0.
\end{equation}
\end{lemma}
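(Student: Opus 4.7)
The plan is to treat the two displayed inequalities of Lemma~\ref{lem2b} separately. For \eqref{lem2bineq1} I will reduce to a hyperbolic Wu--Srivastava-type inequality and prove the latter by successive differentiation. For \eqref{ineq0515}, the left-hand inequality is the hyperbolic Cusa--Huygens inequality (the right half of \eqref{lazhyp}) already recalled in the introduction, so only the right-hand inequality requires new work.

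For \eqref{lem2bineq1}, both sides are even in $x$, so it suffices to handle $x>0$; there all quantities are positive, and the inequality rewrites successively as $\sqrt{9+4x^{2}}\le 1+2x\coth x$ and, after squaring and using $\coth^{2}x-1=1/\sinh^{2}x$, as
\[
\left(\frac{x}{\sinh x}\right)^{2}+\frac{x}{\tanh x}\ge 2.
\]
I would multiply through by $\sinh^{2}x$ and set $u=2x$ to obtain the equivalent form $h(u):=u^{2}+u\sinh u-4\cosh u+4\ge 0$. One checks that $h(0)=h'(0)=h''(0)=h'''(0)=0$ and $h^{(4)}(u)=u\sinh u\ge 0$, so four successive integrations give $h\ge 0$ on $\mathbb{R}$, with strict inequality for $u\neq 0$.

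For the right half of \eqref{ineq0515}, I would use the half-angle identity $\cosh(2x/3)+1=2\cosh^{2}(x/3)$ to rewrite the target as $(\cosh x+2)/3<(\cosh x)^{1/3}\cosh^{2}(x/3)$ and then cube both (positive) sides to obtain $(\cosh x+2)^{3}<27\cosh x\cdot\cosh^{6}(x/3)$. Introducing $c=\cosh(x/3)\ge 1$ and invoking the triple-angle formula $\cosh x=4c^{3}-3c$, this becomes the polynomial inequality
\[
Q(c):=108c^{9}-81c^{7}-(4c^{3}-3c+2)^{3}>0,\qquad c>1.
\]
Since $Q(1)=0$, polynomial division yields $Q(c)=(c-1)R(c)$ with $R$ an explicit polynomial of degree $8$ having two negative coefficients.

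The main technical obstacle is showing $R(c)>0$ for $c\ge 1$; this is the one step that is not automatic. My plan is to absorb the negative coefficients into neighbouring positive ones: the block $107c^{6}-97c^{4}=c^{4}(107c^{2}-97)$ is at least $10c^{4}$ for $c\ge 1$; the quadratic remainder $26c^{2}-28c+8$ has discriminant $784-832=-48<0$ and positive leading coefficient, so it is positive on all of $\mathbb{R}$; and the remaining monomials $44c^{8}$, $44c^{7}$, $11c^{5}$, $47c^{3}$ in $R$ are manifestly non-negative for $c\ge 1$. Summing these positive contributions yields $R(c)>0$ on $[1,\infty)$, which completes the proof.
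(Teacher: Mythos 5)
Your proof is correct, but it follows a genuinely different route from the paper on both halves of the lemma. For \eqref{lem2bineq1} the paper sets $f_2(x)=(2x/\tanh x+1)^2-4x^2-9$, shows $f_2'>0$ by reducing the sign of the derivative to the hyperbolic Wilker inequality $(\sinh x/x)^2+(\tanh x)/x>2$, and concludes from $f_2(0)=0$; you instead observe that \eqref{lem2bineq1} is, after clearing denominators and squaring, \emph{algebraically equivalent} (for $x>0$) to the hyperbolic Wu--Srivastava inequality $(x/\sinh x)^2+x/\tanh x\ge 2$, which you then prove via $h(u)=u^2+u\sinh u-4\cosh u+4$ with $h(0)=h'(0)=h''(0)=h'''(0)=0$ and $h^{(4)}(u)=u\sinh u\ge0$ (all of which I have checked). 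Your equivalence in fact contains, and reverses the direction of, the paper's later corollary that \eqref{lem2bineq1} implies hyperbolic Wu--Srivastava; there is no circularity since your proof of $h\ge0$ is self-contained. For the second inequality of \eqref{ineq0515} the paper differentiates $f_4(x)=\frac{\cosh x+2}{3}-(\cosh x)^{1/3}\frac{\cosh(2x/3)+1}{2}$ and controls the derivative by an auxiliary sum of hyperbolic sines estimated through its second derivative; you instead cube, substitute $c=\cosh(x/3)$ with the triple-angle identity $\cosh x=4c^3-3c$, and reduce to the polynomial inequality $Q(c)=(c-1)R(c)>0$ with $R(c)=44c^8+44c^7+107c^6+11c^5-97c^4+47c^3+26c^2-28c+8$, whose positivity on $[1,\infty)$ follows from your grouping ($107c^6-97c^4\ge10c^4$ and $26c^2-28c+8>0$ by negative discriminant); I have verified the expansion and the synthetic division. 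Your polynomialization is arguably easier to check line by line, while the paper's derivative argument avoids the ninth-degree algebra; both are complete proofs.
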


\begin{proof} Clearly both sides of the inequality \eqref{lem2bineq1} get value 1 at $x=0$. By symmetry of the function we need consider only the positive values of $x$. Let 
$$f_2(x)=\left( \frac{2x}{\tanh x}+1\right)^2-4x^2-9.$$
We get, 
\begin{eqnarray*}
f_2'(x) &=&4\left({\rm coth} x+\frac{x(1-2x \,{\rm coth} \, x)}{(\sinh x)^2}\right)\\
&=& 4\frac{x^2\cosh x}{(\sinh x)^3}\left(\left(\frac{\sinh x}{x}\right)^2+\frac{\tanh x}{x}-2\right)\\
&=& 4\frac{x^2\cosh x}{(\sinh x)^3}f_3(x)>0,
\end{eqnarray*}
where the inequality follows since $f_3(x)>0$ is equivalent to
\[
  \frac{\sinh x}{x}+\frac{1}{\cosh x}>\frac{x}{\sinh x},
\]
which is clearly true because $(\sinh x)/x > 1$. Now $f_2$ is strictly increasing, and $\lim_{x\to 0}f_2(x)=0<f_2(x)$. This implies the proof of 
\eqref{lem2bineq1}.

The first inequality in \eqref{ineq0515} is well known, for the second inequality we define
$$f_4(x)=\frac{\cosh x+2}{3}-(\cosh x)^{1/3}\frac{\cosh (2x/3)+1}{2}.$$
Simple computation gives
%
$$f'_4(x)=\frac{\sinh \frac{x}{3}-2\sinh x+4 \cosh^{2/3}x \sinh x-3 \sinh \frac{5x}{3}}{12(\cosh x)^{2/3}}$$
and clearly $\sinh \frac{x}{3}-2\sinh x < 0$. We prove that $4 \cosh^{2/3}x \sinh x-3 \sinh \frac{5x}{3} < 0$, which is equivalent to
\[
  f_5(x) = 4 \sinh x+2\sinh (3x)+ \frac{11}{8} \sinh (5x)-\frac{81}{8}\sinh \frac{5x}{3} > 0.
\]
Since
\begin{eqnarray*}
  f_5''(x) & = & 4\sinh x+18\sinh (3x)+\frac{275}{8}\sinh (5x)-\frac{225}{8}\sinh \frac{5x}{3}\\
  & \ge & \frac{275}{8}\sinh (5x)-\frac{225}{8}\sinh \frac{5x}{3} > 0
\end{eqnarray*}
for all $x>0$, it is clear that $f_5'(x)>f_5'(0)=0$ and $f_5(x)$ is increasing. Thus $f_5(x)>f_5(0)=0$.
\end{proof}

\begin{remark}
  In Lemma \eqref{lem2bineq1} the difference between the function and the upper bound is less than 0.02. The upper bound is asymptotically sharp $\frac{2}{\sqrt{9+4x^2}-1} - \frac{\tanh x}{x} \to 0$ as $x \to \pm \infty$.
\end{remark}

\section{Proofs of the main results}
In this section we give the proof of our theorems.
\vspace{.3cm}

\noindent{\bf Proof of Theorem \ref{thm1}.}
Sharpness of the bounds is obvious. Since the bounds and the function $\frac{\sin x}{x}$ are even we need to prove sharp inequality for $x \in (0,\pi/2)$. Let $$f_6(x)=\frac{\cos x-1}{(\sin x)/x-1}=\frac{(\sin x)/x-x\cot x}{x/\sin x-1}=\frac{A_2(x)}{C_2(x)}.$$
Using series expansion formulas \eqref{xcot} and \eqref{xsin}, we get

\begin{eqnarray*}
A_2(x)&=&\sum_{n=1}^\infty\frac{2^{2n}-2}{(2n)!}|B_{2n}|x^{2n}
+\sum_{n=1}^\infty\frac{2^{2n}}{(2n)!}|B_{2n}|x^{2n}\\
&=&\sum_{n=1}^\infty\frac{2(2^{2n}-1)}{(2n)!}|B_{2n}|x^{2n}=\sum_{n=1}^\infty \tilde{a}_nx^{2n},
\end{eqnarray*}
and
$$C_2(x)=\sum_{n=1}^\infty\frac{2^{2n}-2}{(2n)!}|B_{2n}|x^{2n}=\sum_{n=1}^\infty \tilde{c}_nx^{2n}.$$
Write $\tilde{d}=\tilde{a}/\tilde{c}=2(2^{2n}-1)/(2^{2n}-2)$. Clearly $\tilde{d}$ is a decreasing 
function of $n\in\mathbb{N}$. Hence $f_6$ is decreasing by Lemma \ref{lembk}. 
Applying l'H\^opital rule, we get $\lim_{x\to 0}f_6(x)=3$ 
and $\lim_{x\to \pi/2}f_6(x)=\pi/(\pi-2)$, this finishes the proof.
$\hfill\square$

\vspace{.3cm}

\noindent{\bf Proof of Theorem \ref{thm0}.} Equality in the claim is clearly attained at points $x=-\pi/2$, $x=0$ and $x=\pi/2$. Since the left hand side of the inequality is an even function we need to show that the inequality is sharp for $x \in (0,\pi/2)$. Let 
$$\frac{x/\sin x-\cos x}{(\sin x)/x-x\cot x}=\frac{(x/\sin x)^2-x\cot x}{1-x\cot x}=f_1(x).$$
By Lemma \ref{lema}, we get
$$2<\frac{x/\sin x-\cos x}{(\sin x)/x-x\cot x}<\frac{\pi^2}{4},$$
this implies the following inequalities
\begin{equation}\label{sinxnew}
\frac{4}{\pi^2}\left(\frac{x}{\sin x}+\left(\frac{\pi^2}{4}-1\right)\cos x\right)<\frac{\sin x}{x}
<\frac{1}{2}\left(\frac{x}{\sin x}+\cos x\right).
\end{equation}
The first inequality of \eqref{sinxnew} can be written as \eqref{thm0ineq}. This completes the proof.$\hfill\square$
\vspace{.3cm}

The second inequality of \eqref{sinxnew} is also proved by Neuman and S\'andor, see \cite[Theorem 2.3]{neusan},
They pointed out that it can be written as the the Wu-Srivastava inequality.

\begin{lemma}\label{lem1202}
Let $\alpha = \pi/(\pi-2)$ as in Theorem \ref{thm1}. The function 
$$f_\alpha(b)=\left(\frac{\alpha}{\alpha+b-1}\right)^\alpha+\frac{\alpha b}{\alpha+b-1}$$
is decreasing from $(0,1)$ onto $(2,k)$, where $k=(\pi/2)^\alpha \approx 3.46505$.
\end{lemma}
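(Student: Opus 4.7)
The plan is to verify the two endpoint values directly and then establish strict monotonicity on $(0,1)$ by a single derivative computation. For the endpoints, the key algebraic observation is that since $\alpha=\pi/(\pi-2)$ gives $\alpha-1 = 2/(\pi-2)$, one has the clean identity $\alpha/(\alpha-1)=\pi/2$. Therefore
$$f_\alpha(0)=\left(\frac{\alpha}{\alpha-1}\right)^\alpha = \left(\frac{\pi}{2}\right)^\alpha = k,$$
while $f_\alpha(1)=1+1=2$ by direct substitution.

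For the monotonicity, I would introduce the abbreviation $u=u(b)=\alpha+b-1$, which is positive throughout $[0,1]$ since $\alpha-1 = 2/(\pi-2)>0$, and rewrite $f_\alpha(b)=\alpha^{\alpha}u^{-\alpha}+\alpha b\, u^{-1}$. A direct differentiation, together with the identity $u-b=\alpha-1$, should collapse to the compact form
$$f_\alpha'(b)=\frac{\alpha}{u^{2}}\left[(\alpha-1)-\frac{\alpha^\alpha}{u^{\alpha-1}}\right].$$
Hence $f_\alpha'(b)<0$ is equivalent to $(\alpha-1)\,u^{\alpha-1}<\alpha^{\alpha}$. Because $\alpha>1$, the map $u\mapsto u^{\alpha-1}$ is increasing, and on $b\in(0,1)$ we have $u\in(\alpha-1,\alpha)$, so the left side is bounded above by $(\alpha-1)\alpha^{\alpha-1}$. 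Thus it suffices to verify $(\alpha-1)\alpha^{\alpha-1}<\alpha^{\alpha}$, which reduces to the trivial inequality $\alpha-1<\alpha$.

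With strict monotonicity and continuity in hand and the two boundary values computed, the image of $(0,1)$ under $f_\alpha$ is precisely the open interval $(f_\alpha(1),f_\alpha(0))=(2,k)$, finishing the proof. Honestly, no step in this plan looks genuinely obstructive; the only delicate point is spotting the identity $\alpha/(\alpha-1)=\pi/2$, which is what pins down the upper endpoint as $(\pi/2)^\alpha$ and clearly matches the constant appearing on the right of \eqref{newineq2} in Theorem \ref{newthm}, indicating that this lemma is being prepared as the analytic heart of the proof of that theorem.
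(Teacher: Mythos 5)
Your proposal is correct and follows essentially the same route as the paper: differentiate $f_\alpha$ directly, show $f_\alpha'<0$ on $(0,1)$, and read off the range from the endpoint values $f_\alpha(0)=(\alpha/(\alpha-1))^\alpha=(\pi/2)^\alpha$ and $f_\alpha(1)=2$. Your justification of negativity — reducing $f_\alpha'(b)<0$ to $(\alpha-1)u^{\alpha-1}<\alpha^\alpha$ and bounding the left side by $(\alpha-1)\alpha^{\alpha-1}$ since $u=\alpha+b-1<\alpha$ — is in fact cleaner and more checkable than the paper's second displayed expression, whose numerator is written as if it were independent of $b$.
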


\begin{proof} We get
\begin{eqnarray*}
f'_\alpha(b)&=&\frac{a \left(a-1-(a+b-1)
   \left(a/(a+b-1)\right)^a\right)}{(a+b-1)^2}\\
	&=& -\frac{\pi(\pi ^{\pi/(\pi-2)} (2+\pi(\pi
   -2))^{(2-\pi)/2}-2)}{(2+b(\pi -2))^2},
	\end{eqnarray*}
	which is negative, and $f_\alpha$ tends to $k$ and 2 when $x$ tends to $0$ and $1$, respectively.
\end{proof}

\noindent{\bf Proof of Theorem \ref{newthm}.} The inequality \eqref{newineq1} follows from the first inequality of
\eqref{thm1ineq}. Again, utilizing the same inequality
$$\frac{x}{\sin x}< \frac{\alpha}{\alpha -1 +\cos x},$$
we get
$$\left(\frac{x}{\sin x}\right)^\alpha+\frac{x}{\tan x}<\left(\frac{\alpha}{\alpha-1+\cos x}\right)^\alpha+
\frac{\alpha \cos x}{\alpha-1+\cos x}=f_\alpha(\cos x)<\left(\frac{\pi}{2}\right)^\alpha$$
by Lemma \ref{lem1202}, because $f_\alpha(\cos x)$ is strictly increasing and $\lim_{x\to 1}f_\alpha(\cos x)=(\pi/2)^\alpha$.
$\hfill\square$

\vspace{.3cm}


\noindent{\bf An other proof of \eqref{newineq1}.}
Let
$$f(x)=  (\alpha-1)\frac{x}{\sin x}  + \frac{x}{\tan x} $$
for $x\in(0,\pi/2)$. 
An easy computation gives    
$$(\sin x)^2\cdot f'(x)=  (\alpha-1)\sin x-(\alpha-1)x\cos x  
+ \sin x\cos x -x=   g(x).$$
One has  $g'(x)=  (2(\sin x)/x)\cdot h(x)$, where  $h(x)= (\alpha-1)/2 - (\sin x)/x$.
As the function of $x$, $(\sin x)/x$ is strictly decreasing, the equation 
$(\alpha-1)/2= (\sin x)/x$ has at most a single root in $(0, \pi/2)$.  

Suppose that $(\alpha-1)/2 <1$, then the equation has exactly one root  $x_0\approx 0.8795$. 
As $h(x)<0$ for $x<x_0$ and $h(x) >0$  for $x>x_0$, the function $g(x)$ will be strictly decreasing for $x$ in $(0, x_0)$, and strictly increasing in $(x_0, \pi/2)$. 
 
Suppose that  $\alpha>\pi/2 +1$, which is equivalent to $(\alpha-1)/2 > \pi/4$.  Then we get  $g(\pi/2)= \alpha-1-\pi/2>0$. As $g(0)=0$, clearly $g(x_0)<0$, and $g$ will have a single root $x_1$ between $(x_0, \pi/2)$. Then we get 
 $g(x)<0$ for $x$ in $(0,x_1)$, and $g(x)>0$ for $x$ in $(x_1, \pi/2)$, 
where $x_1\approx 1.1559$.
This means that at the points $0$ and $\pi/2$ the function $f(x)$ will take the maximum values. Supposing $f(\pi/2)\leq f(0)$, the inequality  $f(x)\leq \alpha$  will be true.
Now, $f(\pi/2)\leq f(0)$ means exactly that $\alpha\leq   \pi/(\pi-2)$. Remark that  for  the best possible $\alpha=\pi/(\pi-2)$ one has also $\alpha >\pi/2 +1$  and 
$(\alpha-1)/2<1$, so the assumed properties in the proof are valid. This finishes the proof of inequality $f(x)\leq \alpha$, with best possible $\alpha=\pi/(\pi-2)$.$\hfill\square$

\begin{remark}
Indeed, the point $x_1$ above is the minimum point of $f(x)$ on $(0, \pi/2)$. In fact the following converse of inequality \eqref{newineq1} 
\begin{equation}\label{indep}
(\alpha-1)\frac{x}{\sin x}  + \frac{x}{\tan x}\geq f(x_1)\approx 2.7219
\end{equation}
holds true for  $\alpha= \pi/(\pi-2), \,(\alpha-1)/2<1$  and $\alpha>\pi/2 +1$ 
(as $\alpha <3$, it is sufficient to suppose   $\pi/2+1<\alpha \leq \pi/(\pi-2)$).
\end{remark}

\noindent{\bf Proof of Theorem \ref{thm2}.}
Clearly the function, its lower and upper bound get value 3 at origin. By symmetry of the function we consider only values $x \in (0,\pi/2)$. The second inequality in \eqref{thm2ineq} is equivalent to write
$$f_7(x)=\left(\frac{2+\cos x}{3\cos x}\right)-\left(\frac{2x}{\sin x}+\frac{x}{\sin x\cos x}\right)>0.$$
It is sufficient to prove that $f_7>0$. Using the the following inequalities
$$(\cos(x/2))^{4/3}<\frac{\sin x}{x}<\frac{2+\cos x}{3}$$ we get,
\begin{eqnarray*}
f_7(x)&>&\frac{3}{\cos x}\frac{\sin x}{x}-\frac{x}{\sin x}\left(2+\frac{1}{\cos x}\right)\\
&=&\frac{3x}{\sin x\cos x}\left(\left(\frac{\sin x}{x}\right)^2-\frac{2\cos x+1}{3}\right)\\
&>&\frac{3x}{\sin x\cos x}\left(\left(\frac{1+\cos x}{2}\right)^{4/3}-\frac{2\cos x+1}{3}\right)\\
&=&\frac{6x}{\sin 2x}f_8(x).\\
\end{eqnarray*}
For showing that $f_8$ is positive, we define the function
$$f_9(y)=\frac{(1+y)^4}{(2y+1)^3},\quad y\in(0,1),$$
and get 
$$f'_9(y)=\frac{(1-y)(1+y)^3}{(1+2y)^4}<0,$$
with $f_9(1)=16/27$. This implies that $f_8$ is positive, and this completes the proof of the second inequality.
$\hfill\square$

\vspace{.3cm}

\begin{remark}
  The upper bound of Theorem \ref{thm2} holds true for values $x \in (-\pi,\pi)$. The difference between the function and the lower bound is less than 1.6 and between the function and the upper bound is less than 0.55. In both cases the difference is less than $x^2$.
\end{remark}

\begin{corollary}\label{jozs} For $x\in(0,\pi/2)$, we have
\begin{equation}\label{joz2811a}
\frac{\pi}{2}   + \cos x<  \frac{x}{\sin x} +2\frac{x}{\tan x} <  2 + \cos x, 
\end{equation}
\end{corollary}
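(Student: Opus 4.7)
\noindent\textbf{Proof proposal for Corollary \ref{jozs}.} The upper bound $\frac{x}{\sin x}+2\frac{x}{\tan x}<2+\cos x$ is exactly the upper bound of Theorem \ref{thm2}, so only the lower bound $\frac{\pi}{2}+\cos x<\frac{x}{\sin x}+2\frac{x}{\tan x}$ requires work. Multiplying through by $2\sin x>0$ I reduce the claim to
$$H(x):=2x+4x\cos x-\pi\sin x-\sin 2x>0,\qquad x\in(0,\pi/2).$$
Direct evaluation gives $H(0)=0$ and $H(\pi/2)=\pi-\pi=0$, so the strict positivity inside the interval must come from showing that $H$ is unimodal: first strictly increasing from $0$, then strictly decreasing back to $0$.

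To analyze monotonicity I would compute
$$H'(x)=2+4\cos x-4x\sin x-\pi\cos x-2\cos 2x=(4-\pi)\cos x+4\sin x(\sin x-x),$$
using $2\cos 2x=2-4\sin^2 x$. The key simplification is to divide by $\sin x>0$, which converts the awkward mixed term $x\sin x$ into a linear one:
$$k(x):=\frac{H'(x)}{\sin x}=(4-\pi)\cot x+4\sin x-4x,\qquad k'(x)=-(4-\pi)\csc^2 x+4(\cos x-1).$$
Since $4>\pi$ and $\cos x<1$ on $(0,\pi/2)$, both summands of $k'(x)$ are negative, so $k$ is strictly decreasing. From $\lim_{x\to 0^+}k(x)=+\infty$ and $k(\pi/2)=4-2\pi<0$, the function $k$ has a unique zero $x_0\in(0,\pi/2)$, and $H'=\sin x\cdot k$ is positive on $(0,x_0)$ and negative on $(x_0,\pi/2)$. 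Together with $H(0)=H(\pi/2)=0$ this gives $H(x)>0$ on $(0,\pi/2)$, as required.

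The main obstacle is spotting the division-by-$\sin x$ trick. Without it, one would try to analyze $H''$ directly, which produces $4\sin 2x-(8-\pi)\sin x-4x\cos x$, a sign-ambiguous combination forcing repeated further differentiation. With the trick, unimodality follows from an elementary Rolle-type argument on a function vanishing at both endpoints, and one obtains the strict lower bound on the open interval $(0,\pi/2)$ together with the observed equality at the right endpoint $x=\pi/2$.
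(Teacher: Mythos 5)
Your argument is correct, but it takes a genuinely different route from the paper. The paper proves both bounds simultaneously by a single monotonicity statement: it sets $f(x)=x/\sin x+2x/\tan x-\cos x$, shows $f'(x)<0$ on $(0,\pi/2)$ by differentiating three times (multiplying by $(\sin x)^2$ and factoring out $\sin x$ along the way, ending at $k'(x)=4(\cos x)^2-4\cos x-2(\sin x)^2<0$), and then reads off $\pi/2=f(\pi/2)<f(x)<f(0+)=2$. You instead import the upper bound from Theorem \ref{thm2} and prove only the lower bound, clearing denominators to get $H(x)=2x+4x\cos x-\pi\sin x-\sin 2x$ with $H(0)=H(\pi/2)=0$, and establishing unimodality of $H$ via the strictly decreasing function $k(x)=H'(x)/\sin x=(4-\pi)\cot x+4\sin x-4x$, which runs from $+\infty$ to $4-2\pi<0$. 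All of your computations check out ($H'(x)=(4-\pi)\cos x+4\sin x(\sin x-x)$ is correct, and $k'(x)=-(4-\pi)\csc^2 x+4(\cos x-1)<0$ since $4>\pi$), and the Rolle-type conclusion $H>0$ on the open interval is sound. The trade-offs: the paper's single decreasing function is more economical and exhibits the two bounds as the two endpoint limits of one quantity, making the sharpness at $x=0$ and $x=\pi/2$ transparent; your division-by-$\sin x$ trick avoids the paper's slightly longer chain of sign determinations and isolates the genuinely new content of the corollary (the lower bound), at the cost of depending on Theorem \ref{thm2} for the upper bound.
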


\begin{proof} Let $f(x)=  x/\sin x + 2x/\tan x –\cos x$ for $x\in(0,\pi/2)$.
After elementary computations, we get   

$$f'(x)\cdot (\sin x)^2 =  \sin x-x\cos x +2\sin x \cos x -2x +(\sin x)^3  =h(x).$$   One has 
$h'(x)=  \sin x\cdot k(x),$  where   
$$k(x)=  x-4\sin x +3\sin x\cos x.$$  
As   
$k'(x)= 4 (\cos x )^2- 4\cos x-2 (\sin x)^2,$  by    
$0<\cos x<1$ we clearly get $k'(x)<0$. Thus $k(x)<k(0)=0$, implying $h(x)<h(0)=0$. Finally, we get $f'(x)<0$. Thus $f(x)$ is strictly decreasing, this implies
$f(\pi/2)<f(x)< f(0)$, so the result follows.
\end{proof}

We remark here that the first inequality of \eqref{joz2811a} cannot be compared with the first inequality of \eqref{thm2ineq}, and obviously the right hand sides of both inequalities are the same.

\begin{corollary} The inequality \eqref{lem2bineq1} implies the hyperbolic version of the Wu-Srivastava inequality \eqref{wilk}.
\end{corollary}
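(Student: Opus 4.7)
The plan is to derive the hyperbolic Wu--Srivastava inequality $(x/\sinh x)^2+x/\tanh x>2$ directly from \eqref{lem2bineq1} by a short chain of algebraic equivalences. Since both sides of the target inequality are even functions of $x$, it suffices to treat $x>0$.

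First, I take reciprocals in \eqref{lem2bineq1}; both sides are positive, so this reverses the inequality and yields
$$\frac{2x}{\tanh x}+1\geq \sqrt{9+4x^2}.$$
Both sides remain positive, so I may square to obtain
$$\frac{4x^2}{\tanh^2 x}+\frac{4x}{\tanh x}+1\geq 9+4x^2,$$
which after subtracting $1$ and dividing by $4$ becomes
$$\frac{x^2}{\tanh^2 x}+\frac{x}{\tanh x}\geq 2+x^2,$$
or equivalently $x^2\bigl(1/\tanh^2 x-1\bigr)+x/\tanh x\geq 2$.

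The crucial observation is the Pythagorean identity for hyperbolic functions,
$$\frac{1}{\tanh^2 x}-1=\frac{\cosh^2 x-\sinh^2 x}{\sinh^2 x}=\frac{1}{\sinh^2 x},$$
which converts the previous line into exactly $(x/\sinh x)^2+x/\tanh x\geq 2$. Strictness is inherited from \eqref{lem2bineq1}: the proof of Lemma~\ref{lem2b} actually establishes the strict version for $x\neq 0$ (there one has $f_2(x)>f_2(0)=0$ for all $x>0$), and evenness then extends the strict inequality to every $x\neq 0$. There is no real obstacle here: the whole argument is essentially a one-line algebraic rearrangement, and the only insight required is that squaring the reciprocated form of \eqref{lem2bineq1} reproduces precisely the Wu--Srivastava expression modulo the identity $\cosh^2 x-\sinh^2 x=1$.
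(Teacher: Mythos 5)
Your proposal is correct and follows essentially the same route as the paper: reciprocate \eqref{lem2bineq1}, square to get $(x/\tanh x)^2+x/\tanh x-x^2-2\geq 0$, and apply the identity $(x/\tanh x)^2-x^2=(x/\sinh x)^2$. Your explicit attention to strictness (via $f_2(x)>0$ for $x>0$ in the proof of Lemma \ref{lem2b}) is a welcome refinement, since the paper's own write-up glosses over this and moreover contains typographical slips in the final two displays.
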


\begin{proof} It is easy to see that the inequality \eqref{lem2bineq1} implies the validity of 
$$\left(\frac{ x}{\tanh x}\right)^2+\frac{ x}{\tanh x}-x^2-2>0.$$
The above inequality can be written as
$$\left(\frac{ x}{\sinh x}\right)^2+\frac{ x}{\tanh x}>0,$$
by observing that
\[
\left(\frac{ x}{\sinh x}\right)^2-x^2=\frac{ x}{\sinh x}>0.\qedhere
\]
\end{proof}

%
%

\vspace{.3cm}

\noindent{\bf Proof of Theorem \ref{thm4}.} The first inequality is well known and follows from \eqref{lazhyp}. Similarly the second inequality
follows from \eqref{ineq0515}.
$\hfill\square$

\begin{theorem}\label{thm2201} For $x\in(-\pi/2,\pi/2)$, one has
\begin{enumerate}
\item $\displaystyle\frac{\cos x+2}{3^{\alpha_1}}<\frac{\sin x}{x}<\frac{\cos x+2}{3^{\beta_1}},$\\
\item $\displaystyle\frac{\cos x+2^{\alpha_2}}{3}<\frac{\sin x}{x}<\frac{\cos x+2^{\beta_2}}{3}.$
\end{enumerate}
with best possible constants $\alpha_1=\log(\pi)/\log (3)\approx 1.04198,\,\alpha_2=\log(\pi/6)/\log (2)
\approx 0.93345$, $\beta_1=1$ and $\beta_2=1$.
\end{theorem}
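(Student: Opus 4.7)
The plan is to derive Theorem \ref{thm2201} by reducing the four inequalities to results already established earlier in the paper. Since $\sin x/x$, $\cos x$, and all proposed bounds are even in $x$, I would restrict attention to $x \in [0, \pi/2)$. The two upper inequalities, with $\beta_1 = \beta_2 = 1$, become literally the Cusa--Huygens bound $\sin x/x < (\cos x + 2)/3$ from \eqref{laztri}; equality at $x = 0$ (both sides equal $1$) gives sharpness, and any $\beta > 1$ drops the right-hand side at $x = 0$ to $3^{1-\beta} < 1$, violating the inequality there. Hence $\beta_1 = \beta_2 = 1$ are best possible.

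For the two lower inequalities the key step is to compare the proposed bounds against the sharper lower bound of Theorem \ref{thm1}, which rearranged reads
$$\frac{\sin x}{x} \ge \frac{(\pi - 2)\cos x + 2}{\pi},$$
with equality at $x \in \{-\pi/2, 0, \pi/2\}$. I would then verify two elementary pointwise identities,
$$\frac{(\pi - 2)\cos x + 2}{\pi} - \frac{\cos x + 2}{\pi} = \frac{(\pi - 3)\cos x}{\pi},$$
$$\frac{(\pi - 2)\cos x + 2}{\pi} - \frac{\cos x + 6/\pi}{3} = \frac{(2\pi - 6)\cos x}{3\pi},$$
both strictly positive on $[0, \pi/2)$ since $\pi > 3$ and $\cos x > 0$ there. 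Chaining these with Theorem \ref{thm1} immediately yields the lower inequalities in (1) and (2), with strict inequality on the open interval $(-\pi/2, \pi/2)$.

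Sharpness of $\alpha_1$ and $\alpha_2$ follows from the limiting behaviour at $x \to \pi/2^-$, where $\cos x \to 0$ and $\sin x/x \to 2/\pi$, while the two proposed lower bounds tend to $2/3^{\alpha_1}$ and $2^{\alpha_2}/3$. Matching these to $2/\pi$ forces $3^{\alpha_1} = \pi$ and $2^{\alpha_2} = 6/\pi$, giving $\alpha_1 = \log\pi/\log 3$ and $\alpha_2 = \log(6/\pi)/\log 2$, consistent with the stated numerical value $\approx 0.93345$. For any smaller choice of $\alpha$ the proposed lower bound would exceed $2/\pi$ in some left neighborhood of $\pi/2$, contradicting the inequality. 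I expect no real analytic obstacle here: the theorem is essentially a light corollary of Theorem \ref{thm1} combined with the Cusa--Huygens inequality, and the only care needed is in correctly reading the sharpness value of $\alpha_2$ and in tracking the passage from the non-strict inequality of Theorem \ref{thm1} to the strict inequality of Theorem \ref{thm2201} on the open interval.
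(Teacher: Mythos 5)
Your proof is correct, but it takes a genuinely different route from the paper. The paper proves each part by a fresh monotonicity argument: for (1) it shows $f_{10}(x)=x(2+\cos x)/\sin x$ is strictly increasing on $(0,\pi/2)$ (so $\log f_{10}/\log 3$ runs from $\beta_1=1$ to $\alpha_1=\log\pi/\log 3$), and for (2) it shows $f_{13}(x)=3(\sin x)/x-\cos x$ is strictly decreasing from $2$ to $6/\pi$; both bounds of each part, together with their sharpness, drop out of the single monotone function. You instead obtain the upper bounds directly from Cusa--Huygens \eqref{laztri} and the lower bounds by chaining the lower bound of Theorem \ref{thm1}, $\sin x/x\ge((\pi-2)\cos x+2)/\pi$, with the two elementary positive differences $(\pi-3)\cos x/\pi$ and $(2\pi-6)\cos x/(3\pi)$; your algebra checks out, and the limit argument at $x\to\pi/2$ correctly identifies $\alpha_1,\alpha_2$ (you also rightly read $\alpha_2=\log(6/\pi)/\log 2$, matching the numerical value $0.93345$, rather than the misprinted $\log(\pi/6)/\log 2$). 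Your route is shorter and exposes the theorem as a strict weakening of Theorem \ref{thm1}; the paper's route is self-contained and the monotonicity of $f_{10}$ and $f_{13}$ carries more information than the two endpoint inequalities alone. Two small blemishes: your final sharpness sentence says ``for any smaller choice of $\alpha$,'' which is the right direction for $\alpha_1$ but reversed for $\alpha_2$ (there a \emph{larger} $\alpha_2$ is what would violate the inequality near $\pi/2$); and at $x=0$ the upper inequality is an equality in the limit, so strictness really only holds for $x\ne 0$ --- but the paper itself glosses over this in the statement.
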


\begin{proof} For (1), let $f_{10}(x)=\frac{x(2+\cos x)}{\sin x}$, with  $x \in (0, \pi/2)$.    
One has  
$$(\sin x)^2\cdot f'_{10}(x)=  2\sin x -2x\cos x +\sin x \cos x-x =  f_{11}(x).$$ 
As  $f'_{11}(x)=  2(\sin x) (x-\sin x )>0$, we get   $f_{11}(x)>f_{11}(0)= 0$,  so 
$f'_{10}(x)>0$, proving that $f_{10}(x)$ is strictly increasing. This implies that the function
$$f_{12}(x)=\frac{1}{\log 3}\log\left(\frac{x(\cos x+2)}{\sin x}\right)$$
is strictly increasing in $x\in(0,\pi/2)$, and we get
$$\beta_1=f_{12}(0+)<f_{12}(x)<f_{12}(\pi/2)=\alpha_1,$$
thus (1) follows.

For the proof of (2), write $f_{13}=3 (\sin x)/x -\cos x  ,\quad  x \in (0, \pi/2)$.
One has  $$x^2f'_{13}(x)=  3x\cos x + x^2 \sin x -3\sin x =  f_{14}(x).$$
Here  $f'_{14}(x)=  x(x\cos x -\sin x)<0$,  since  $x\cos x< \sin x$. This proves 
$f_{14}(x)<f_{14}(0)=0$, i.e.  $f'_{13}(x)<0$, proving that  $f_{13}(x)$ is strictly decreasing. This implies $f_{13}(x)<f_{13}(0+)= 2$  and $f_{13}(x)>f_{13}(\pi/2)= 6/\pi$ . 
As  $2=2^{\beta_2}$  for $\beta_2=1$, and $6/\pi= 2^{\alpha_2}$ for
$\alpha_2= \log(6/\pi))/\log (2)$, the result follows.
\end{proof}



\noindent{\bf Proof of Theorem \ref{2702}.} Let 
$$f(x)=\log\left(\frac{(\pi-2)\cos x+2}{\pi}\right)+\frac{(\pi-2)x^2}{2\pi}.$$
Simple calculation yields
$$f'(x)=(\pi-2)\left(\frac{x}{\pi}-\frac{\sin x}{2+(\pi-2)\cos x}\right),$$
which is negative by Theorem \ref{thm1}. Thus, the $f$ function is strictly decreasing, and
$$\alpha=\lim_{x\to \pi/2}f(x)<f(x)<0=\beta=\lim_{x\to 0}f(x).$$
This implies the proof.
$\hfill\square$

We finish the paper by giving a new type of Kober's inequality \cite{mit,sandor1801}, which follows easily from \eqref{yanginequ}
and Theorem \ref{2702}.
\begin{corollary}
For $x\in(0,\pi/2)$, the following inequalities hold
$$3\exp \left( -\frac{x^2}{6} \right) -2<\cos x<\frac{\pi\exp(-(\pi-2)x^2/(2\pi))-2}{\pi-2}.$$
\end{corollary}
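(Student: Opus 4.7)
The corollary is essentially a rearrangement of the two inequalities flagged in the hint, so the plan is short: isolate $\cos x$ in each of them and check that the direction of the inequality is preserved.

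For the lower bound, I start from \eqref{yanginequ}, namely $\exp(-x^2/6) < (2+\cos x)/3$, which is valid on all of $(0,\infty)$ and a fortiori on $(0,\pi/2)$. Multiplying both sides by $3$ and subtracting $2$ yields
$$3\exp(-x^2/6) - 2 < \cos x,$$
which is exactly the left-hand inequality of the corollary.

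For the upper bound, I invoke Theorem \ref{2702} with $\beta = 0$, which reads
$$\frac{(\pi-2)\cos x + 2}{\pi} < \exp\!\left(-\frac{(\pi-2)x^2}{2\pi}\right).$$
Multiplying by $\pi$ gives $(\pi-2)\cos x + 2 < \pi\exp(-(\pi-2)x^2/(2\pi))$, and subtracting $2$ then dividing by the strictly positive constant $\pi-2$ preserves the inequality, yielding
$$\cos x < \frac{\pi\exp(-(\pi-2)x^2/(2\pi)) - 2}{\pi-2},$$
which is the right-hand inequality of the corollary.

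There is no genuine obstacle here: both steps are linear rearrangements, and the only thing worth pointing out is that $\pi - 2 > 0$, so dividing by it does not flip the inequality. The substantive analytic content lives in Theorem \ref{2702} and in the Yang inequality \eqref{yanginequ}; the corollary merely repackages them in a Kober-style form.
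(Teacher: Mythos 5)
Your proposal is correct and follows exactly the route the paper intends: the paper states that the corollary "follows easily from \eqref{yanginequ} and Theorem \ref{2702}," and your two linear rearrangements (using $\pi-2>0$ for the division) are precisely that derivation. Nothing is missing.
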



\vspace{.5cm}

\end{document}